\newcommand{\IR}{\mathbb R}
\newcommand{\U}{\mathcal U}
\newcommand{\V}{\mathcal V}
\newcommand{\IQ}{\mathbb Q}
\newcommand{\IN}{\mathbb N}
\newcommand{\cl}{\mathrm{cl}}
\newcommand{\intr}{\mathrm{int}}
\newcommand{\FU}{{\mathcal Q}}
\newcommand{\rcluwL}{{}^\circ\kern-2pt\overline{uw}L}
\newtheorem{theorem}{Theorem}
\newtheorem{corollary}{Corollary}
\newtheorem{proposition}{Proposition}
\newtheorem{problem}{Problem}
\theoremstyle{definition}
\newtheorem{definition}{Definition}
\newtheorem{remark}{Remark}
\title{Each regular paratopological group is completely regular}
\author{Taras Banakh and Alex Ravsky}
\address{T.Banakh: Ivan Franko National University of Lviv (Ukraine), and Jan Kochanowski University in Kielce (Poland)}
\email{t.o.banakh@gmail.com}
\address{A.Ravsky: Pidstryhach Institute for Applied Problems of Mechanics and Mathematics of National Academy of Sciences, Lviv, Ukraine}
\email{oravsky@mail.ru}
\keywords{Tychonoff space, regular space, completely regular space, semiregular space, Hausdorff space, functionally Hausdorff space, semi-Hausdorff space, separation axiom, quasi-uniformity, paratopological group, topological monoid}
\subjclass{54D10; 54D15; 54E15; 22A30}
\begin{document}
\begin{abstract} We prove that a semiregular topological space $X$ is completely regular if and only if its topology is generated by a normal quasi-uniformity. This characterization implies that each regular paratopological group is completely regular.
This resolves an old problem in the theory of paratopological groups, which stood open for about 60 years. Also we define a natural uniformity on each paratopological group and using this uniformity prove that each (first countable) Hausdorff paratopological group is functionally Hausdorff (and submetrizable). This resolves another two known open problems in the theory of paratopological groups.
\end{abstract}
\maketitle

This paper was motivated by an old unsolved problem in the theory of paratopological groups, asking if each regular paratopological group is completely regular (see \cite[Question 1.2]{Rav1}, \cite[Problem 1.3.1]{AT}, and \cite[Problem 2.1]{Tka}). Discussing this problem in the survey \cite[\S2]{Tka}, Tkachenko writes that it is ``open for about 60 years'' and ``the authority of the question is unknown, even if all specialists in the area have it in mind''. Six sections later in \cite[\S8]{Tka} Tkachenko writes that ``it is even more surprising that the following problem of Arhangel'skii \cite[3.11]{Ar} is open: {does every Hausdorff first countable paratopological group admit a weaker metrizable topology?}''

In this paper we will give surprisingly simple affirmative answers to both these open problems. In fact, our principal results hold not only for paratopological groups but also for topological monoids with open shifts and more generally for topological spaces whose topology is generated by a normal quasi-uniformity.

Now let us recall the necessary definitions related to quasi-uniformities (see \cite{FL}, \cite{Ku1} and  \cite{Ku2} for more information).

For a subset $A\subset X$ of a topological space by $\cl_X A$  and $\intr_X A$ we denote the closure and interior of $A$ in $X$. The sets $\cl_X A$, $\intr_X A$ and $\intr_X\cl_X A$ will be also denoted by $\overline{A}$, $A^\circ$, and $\overline{A}^\circ$, respectively.

Let $X$ be a topological space. A subset $U\subset X\times X$ containing the diagonal $\Delta_X=\{(x,y)\in X\times X:x=y\}$ is called an {\em entourage} on $X$.
Given two entourages $U,V\subset X\times X$ let
$$UV=\big\{(x,z)\in X\times X:\mbox{$\exists y\in X$ such that $(x,y)\in U$ and $(y,z)\in V$}\big\}$$be their composition and
$U^{-1}=\{(y,x):(x,y)\in U\}$ be the inverse entourage to $U$.

For a point $x\in X$ the set $B(x;U)=\{y\in X:(x,y)\in U\}$ is called the {\em $U$-ball} centered at $x$, and for a subset $A\subset X$ the set $B(A;U)=\bigcup_{a\in A}B(a;U)$ is the {\em $U$-neighborhood} of $A$.
It will be convenient to denote the sets $\overline{B(A;U)}$, $B(A;U)^\circ$ and $\overline{B(A,U)}^\circ$ by $\overline{B}(A;U)$, $B^\circ(A;U)$ and $\overline{B}^\circ\kern-2pt(A;U)$, respectively.

A {\em quasi-uniformity} on a set $X$ is a family $\U$ such that:
\begin{itemize}
\item for any entourages $U,V\in\U$ there is an entourage $W\in\U$ such that $W\subset U\cap V$;
\item for any $U\in\U$ there is $V\in\U$ such that $VV\subset U$;
\item for every entourage $U\in\U$, any entourage $V\subset X\times X$ containing $U$ belongs to $\U$.
\end{itemize}
A quasi-uniformity $\U$ on $X$ is called a {\em uniformity} if $\U=\U^{-1}$ where $\U^{-1}=\{U^{-1}:U\in\U\}$.

A subfamily $\mathcal B\subset\U$ is called a {\em base} of a quasi-uniformity $\U$ if each entourage $U\in\U$ contains some entourage $B\in\mathcal B$.
Each quasi-uniformity $\U$ on a set $X$ generates a topology $\tau_\U$ on $X$ consisting of sets $W\subset X$ such that for each $x\in W$ there is $U\in\U$ such that $B(x;U)\subset W$. Then for every point $x\in X$ the family $\{B(x;U)\}_{U\in\U}$ is a neighborhood base at $x$. It is clear that the topological space $(X,\tau_\U)$ satisfies the separation axiom $T_1$ if and only if the quasi-uniformity $\U$ is {\em separated} in the sense that $\bigcap\U=\Delta_X$.
It is known (see \cite{Ku1} or \cite{Ku2}) that the topology of any topological space $X$ is generated by a suitable quasi-uniformity. We shall say that a quasi-uniformity $\U$ on a topological space $X$ is {\em continuous} if the topology $\tau_\U$ generated by $\U$ is contained in the topology of $X$.

\begin{definition}
A quasi-uniformity $\U$ on a topological space $X$ is defined to be {\em normal\/} if $\overline{A}\subset \overline{B(A;U)}^\circ$ for any subset $A\subset X$ and any entourage $U\in\U$.

A topological space $X$ is called {\em normally quasi-uniformizable} if the topology of $X$ is generated by a normal quasi-uniformity.
\end{definition}

\begin{proposition}\label{p:uT} Each continuous uniformity $\U$ on a topological space $X$ is normal.
\end{proposition}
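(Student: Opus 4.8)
The plan is to combine the symmetry of the uniformity $\U$ (the equality $\U=\U^{-1}$) with the continuity hypothesis $\tau_\U\subseteq\tau_X$. The key preliminary observation is that for every point $x\in X$ and every entourage $W\in\U$ the set $\intr_X B(x;W)$ is a neighborhood of $x$ in $X$. Indeed, by the description of $\tau_\U$ recalled above, the family $\{B(x;V):V\in\U\}$ is a neighborhood base at $x$ in $(X,\tau_\U)$, so $x$ lies in the $\tau_\U$-interior of $B(x;W)$; this $\tau_\U$-interior is $\tau_\U$-open, hence open in $X$ because $\U$ is continuous, and therefore it is contained in $\intr_X B(x;W)$. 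Thus $x\in\intr_X B(x;W)$.

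Next I fix a subset $A\subseteq X$ and an entourage $U\in\U$. Using $\U=\U^{-1}$, choose a symmetric entourage $W\in\U$ with $W=W^{-1}$ and $WW\subseteq U$ (take any $V\in\U$ with $VV\subseteq U$ and put $W=V\cap V^{-1}$). I will prove the inclusion $\overline A\subseteq\intr_X B(A;U)$, which suffices since $\intr_X B(A;U)\subseteq\intr_X\overline{B(A;U)}=\overline{B(A;U)}^\circ$. So let $x\in\overline A$. By the preliminary observation $\intr_X B(x;W)$ is an open neighborhood of $x$ in $X$, so it meets $A$; pick $a\in A\cap\intr_X B(x;W)$. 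Then $(x,a)\in W$, whence $(a,x)\in W$ by symmetry of $W$. Consequently, for every point $z\in\intr_X B(x;W)\subseteq B(x;W)$ we have $(x,z)\in W$, hence $(a,z)\in WW\subseteq U$, i.e.\ $z\in B(a;U)\subseteq B(A;U)$. This shows $\intr_X B(x;W)\subseteq B(A;U)$; being an open neighborhood of $x$, the set $\intr_X B(x;W)$ witnesses $x\in\intr_X B(A;U)$. As $x\in\overline A$ was arbitrary, $\overline A\subseteq\intr_X B(A;U)\subseteq\overline{B(A;U)}^\circ$, which is exactly the normality of $\U$.

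The only step where something nontrivial happens is the preliminary observation, where continuity of $\U$ is used in an essential way: without it a $W$-ball need not contain any open neighborhood of its center and the whole argument collapses. Everything else is a routine ``there and back'' estimate. It is worth noting that the symmetry of $W$ — i.e.\ the hypothesis that $\U$ is a uniformity rather than merely a quasi-uniformity — is precisely what legitimizes the return step $(x,a)\in W\Rightarrow(a,x)\in W$, which one cannot perform for a general continuous quasi-uniformity. I do not anticipate any real obstacle.
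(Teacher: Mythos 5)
Your proof is correct and follows essentially the same route as the paper: use continuity of $\U$ to make the ball around a closure point of $A$ into an $X$-neighborhood, let it meet $A$, and then travel back and forth through a symmetric entourage to land the whole ball inside $B(A;U)$ (the paper phrases the symmetry step via an entourage $V$ with $V^{-1}V\subset U$ rather than your $W=V\cap V^{-1}$, which is only a cosmetic difference). No gaps.
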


\begin{proof} Given a subset $A\subset X$ and an entourage $U\in\U$, we should prove that $\overline{A}\subset\overline{B(A;U)}^\circ$. Choose any entourage $V\in\U$ such that $V^{-1}V\subset U$. It follows that for every point $a\in\overline{A}$ the ball $B(a,V)$ meets the set $A$ and hence $a\in B(A;V^{-1})$. Then $B(a;V)\subset B(B(A;V^{-1});V)=B(A;V^{-1} V)\subset B(A;U)\subset \overline{B(A;U)}$ and hence $a\in\overline{B(A;U)}^\circ$.
\end{proof}

In the proof of the following theorem we use the classical method of the proof of Urysohn's Lemma.

\begin{theorem}\label{main} If $\U$ is a normal quasi-uniformity on a topological space $X$, then for any non-empty subset $A\subset X$ and entourage $U\in\U$ there exists a continuous function $f:X\to [0,1]$ such that
$A\subset f^{-1}(0)\subset f^{-1}\big([0,1)\big)\subset \overline{B(A;U)}^\circ$.
\end{theorem}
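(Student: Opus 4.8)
The plan is to imitate the proof of Urysohn's Lemma, using the normality of $\U$ in place of normality of the ambient space. Concretely, I will construct a family of open sets $(G_q)_{q\in D}$ indexed by the dyadic rationals $D=\{k2^{-n}:n\ge 1,\ 0<k<2^n\}$ of $(0,1)$ with
$$A\subseteq G_q\subseteq\overline{B(A;U)}^\circ\quad(q\in D),\qquad \overline{G_q}\subseteq G_{q'}\quad(q<q'),$$
and then put $f(x):=\inf\big(\{1\}\cup\{q\in D:x\in G_q\}\big)$. Granting such a family, the verification is the classical one: $f$ vanishes on $A$ because $A\subseteq G_q$ for every $q$; $f^{-1}\big([0,1)\big)=\bigcup_{q\in D}G_q\subseteq\overline{B(A;U)}^\circ$; and $f$ is continuous since $\{f<t\}=\bigcup_{q<t}G_q$ is open, while $\{f>t\}=\bigcup_{q>t}\big(X\setminus\overline{G_q}\big)$ is open because each $G_q$ is open and $\overline{G_q}\subseteq G_{q'}$ whenever $q<q'$.

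The one tool I will use over and over is a consequence of the normality inequality $\overline{S}\subseteq\overline{B(S;W)}^\circ$: \emph{if $V,W\in\U$ and $WW\subseteq V$, then $\overline{B(S;W)}\subseteq\overline{B(S;V)}^\circ$ for every $S\subseteq X$}; indeed, applying normality to the set $B(S;W)$ and the entourage $W$ gives $\overline{B(S;W)}\subseteq\overline{B(B(S;W);W)}^\circ=\overline{B(S;WW)}^\circ\subseteq\overline{B(S;V)}^\circ$ (the special case $\overline{B(S;W)}\subseteq\overline{B(S;WW)}^\circ$ needs no hypothesis on $W$). Fixing also a decreasing sequence $U=U_0\supseteq U_1\supseteq U_2\supseteq\cdots$ in $\U$ with $U_{n+1}U_{n+1}\subseteq U_n$, this lets me ``halve'' a gap: for any $S\subseteq X$ the open set $\overline{B(S;U_{n+1})}^\circ$ satisfies $\overline{S}\subseteq\overline{B(S;U_{n+1})}^\circ$ and $\overline{\overline{B(S;U_{n+1})}^\circ}\subseteq\overline{B(S;U_{n+1})}\subseteq\overline{B(S;U_n)}^\circ$.

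The construction is then a recursion on the level of $q$, starting from the ``endpoints'' $G_0:=A$ and $G_1:=\overline{B(A;U)}^\circ$: at each step one inserts, between consecutive already-defined sets $G_q$ and $G_{q'}$, a new open set $G_{q''}$ for their midpoint with $\overline{G_q}\subseteq G_{q''}$ and $\overline{G_{q''}}\subseteq G_{q'}$, and this is achieved by the halving move above once one knows that the pair $(\overline{G_q},G_{q'})$ is separated by a \emph{quasi-uniform gap}, i.e. by balls around a suitably chosen witnessing set. To keep this possible one must carry along, together with the partial family, a strengthened invariant recording for every consecutive pair such a gap, and update the witnessing sets and entourages when the pair is split. \emph{This recursion step is the point I expect to be the real obstacle, and the difficulty is genuinely quasi-uniform rather than mere bookkeeping:} because a quasi-uniformity has no symmetry, the closure $\overline{B(S;V)}$ of a ball need not be contained in any ball $B(S;V')$, so one cannot control expansions of the already-built (closure-laden) sets $\overline{G_q}$ by taking the witnessing set to be $A$, nor to be $\overline{G_q}$ itself; choosing an invariant simultaneously strong enough to survive halving and weak enough to be maintainable — in particular handling the ``upper'' gap just below $G_1=\overline{B(A;U)}^\circ$, the first place where no naive witness works — is the heart of the matter. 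Once the $G_q$ are in hand, the rest is the routine Urysohn computation sketched in the first paragraph.
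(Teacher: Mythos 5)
You have not given a proof: the entire content of the theorem is the construction of the nested family $(G_q)_{q\in D}$, and you explicitly leave the recursive midpoint-insertion step unresolved, calling the choice of a maintainable invariant ``the heart of the matter'' without supplying one. As written there is no definition of the witnessing sets, no invariant, and no argument that the insertion can be continued past the first stage, so the argument has a genuine gap exactly at the point where the asymmetry of the quasi-uniformity bites. The concluding Urysohn computation is indeed routine, but it is conditional on a family you never construct.

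The missing idea (and the one the paper uses) is that no insertion recursion is needed: all the sets $G_q$ can be produced at once, as balls around the \emph{fixed} set $A$ with respect to composite entourages read off from the binary expansion of $q$. Fix $U_0\subset U$ and $U_nU_n\subset U_{n-1}$, and for a dyadic rational $q=\sum_{i=1}^{l_q}q_i2^{-i}$ put $U^q=U_1^{q_1}U_2^{q_2}\cdots U_{l_q}^{q_{l_q}}$ (with $V^1=V$ and $V^0=\Delta_X$), then work with the sets $\overline{B(A;U^q)}$ and their interiors. Because every set is a ball around $A$ itself, the obstruction you worry about --- that the closure $\overline{B(S;V)}$ of a ball need not lie inside any ball around $S$ --- never arises: normality applied to the set $B(A;W)$ absorbs the closure by composing one more entourage on the right, $\overline{B(A;W)}\subset\overline{B(B(A;W);V)}^\circ=\overline{B(A;WV)}^\circ$, which is exactly the computation you already wrote down as your ``halving'' lemma. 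The required monotonicity $\overline{B(A;U^q)}\subset\overline{B(A;U^r)}^\circ$ for $q<r$ then follows by comparing binary expansions at the first index $l$ with $q_l=0<r_l=1$: if $l_q<l$ one enlarges $U^q$ to $U_1^{r_1}\cdots U_{l-1}^{r_{l-1}}$ and applies the displayed normality step with $V=U_l$; if $l<l_q$ one first applies normality with $V=U_{l_q+1}$ and then uses the telescoping inclusion $U_{l+1}\cdots U_{l_q}U_{l_q+1}\subset U_l$ to replace the tail of $U^q$ beyond position $l-1$ by the single factor $U_l=U_l^{r_l}$. With these sets in place, your definition of $f$ and the continuity argument go through verbatim. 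So what is missing is not a cleverer invariant for your recursion but the classical chaining/binary-expansion device, which eliminates the recursion altogether.
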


\begin{proof} Choose inductively a sequence of entourages $(U_n)_{n=0}^\infty\in\U^{\IN}$ such that $U_0\subset U$ and  $U_{n}U_{n}\subset U_{n-1}$ for every $n\in\IN$.

Let $\IQ_2=\{\frac{k}{2^n}:k,n\in\IN,\;0<k<2^n\}$ be the set of binary fractions in the interval $(0,1)$. Each number $r\in \IQ_2$ can be uniquely written as the sum $r=\sum_{n=1}^{\infty}\frac{r_n}{2^n}$ for some binary sequence $(r_n)_{n=1}^\infty\in\{0,1\}^{\IN}$ containing finitely many units. The sequence $(r_n)_{n\in\IN}$ will be called
the {\em binary expansion} of $r$. Since $r>0$, the number $l_r=\max\{n\in\IN:r_n=1\}$ is well-defined. So, $r=\sum_{i=1}^{l_r}\frac{r_i}{2^i}$.

For an entourage $V\in\U$ we put $V^1=V$ and $V^0=\Delta_X$.
For every $r\in\IQ_2$ consider the entourage $$U^r=U_1^{r_1}\cdots U_{l_r}^{r_{l_r}}\in\mathcal B,$$
which determines the closed neighborhood $\overline{B}(A;U^r)$ of $A$.

We claim that for any rational numbers $q<r$ in $\IQ_2$ the neighborhood $\overline{B}(A;U^q)$ is contained in the interior $\overline{B}^\circ\kern-2pt(A;U^r)$ of the neighborhood  $\overline{B}(A;U^r)$. Let $(q_n)$ and $(r_n)$ be the binary expansions of $q$ and $r$, respectively. The inequality $q<r$ implies that there exists a number $l\in\IN$ such that $0=q_l<r_l=1$ and $q_i=r_i$ for all $i<l$.
It follows that $l_q\ne l\le l_{r}$. If $l_q<l$, then the normality of $\U$ implies
$$
\begin{aligned}
\overline{B}(A;U^q)&=\overline{B}(A;U_1^{q_1}\cdots U_{l_q}^{q_{l_q}})=\overline{B}(A;U_1^{r_1}\cdots U_{l_q}^{r_{l_q}})\subset \overline{B}(A;U_1^{r_1}\cdots U_{l-1}^{r_{l-1}})\subset\\
 &\subset \overline{B}^\circ\kern-1pt(A;U_1^{r_1}\cdots U_{l-1}^{r_{l-1}}U^{r_l}_l)
 \subset\overline{B}^\circ(A;U_1^{r_1}\cdots U_{l_r}^{r_{l_r}})=\overline{B}^\circ\kern-2pt(A;U^r).
\end{aligned}
$$
If $l<l_q$, then the inclusions $U_{k}U_k\subset U_{k-1}$ for $l<k\le l_q+1$, guarantee that $U_{l+1}\cdots U_{l_q}U_{l_q+1}\subset U_l$ and then
\begin{multline*}
\overline{B}(A;U^q)=\overline{B}(x;U_1^{q_1}\cdots U_{l_q}^{q_{l_q}})\subset \overline{B}^\circ(A;U_1^{q_1}\cdots U_{l_q}^{q_{l_q}}U_{l_{q}+1})=\\
=\overline{B}^\circ\big(A;U_1^{q_1}\cdots U^{q_{l-1}}_{l-1}U^0_l\, U_{l+1}^{q_{l+1}}\cdots U_{l_q}^{q_{l_q}}U_{l_q+1}\big)\subset \overline{B}^\circ(A;U_1^{q_1}\cdots U_{l-1}^{q_{l-1}}\, U_l)=\\
=\overline{B}^\circ(A;U_1^{r_1}\cdots U_{l-1}^{r_{l-1}}U_l^{r_l})\subset \overline{B}^\circ(A;U_1^{r_1}\cdots U_{l_{r}}^{r_{l_{r}}})=\overline{B}^\circ(A;U^r).
\end{multline*}
So, $\overline{B}(A;U^q)\subset\overline{B}^\circ(A;U^r)$.

 Now define the function $f:X\to[0,1]$ by the formula
$$f(z)=\inf\big(\{1\}\cup \{q\in\IQ_2:z\in \overline{B}(A;U^q)\}\big)\mbox{ \ for \ } z\in X.
$$

It is clear that $A\subset f^{-1}(0)$ and $f^{-1}\big([0,1)\big)\subset \bigcup_{q\in\IQ_2}\overline{B}(A;U^q)=\bigcup_{r\in\IQ_2}\overline{B}^\circ\kern-2pt(A;U^r)\subset \overline{B}^\circ(A;U_0)\subset \overline{B}^\circ(A;U)$.
To prove that the map $f:X\to [0,1]$ is continuous, it suffices to check that for every real number $a\in (0,1)$ the sets $f^{-1}\big([0,a)\big)$ and $f^{-1}\big((a,1]\big)$ are open. This follows from the equalities $$f^{-1}\big([0,a)\big)=\bigcup_{\IQ_2\ni q<a}\overline{B}^\circ(A;U^q)\mbox{ \ and \ } f^{-1}\big((a,1]\big)=\bigcup_{\IQ_2\ni r>a}X\setminus \overline{B}(A;U^r).$$
\end{proof}

\begin{corollary}\label{cmain} If a topological space $X$ is normally quasi-uniformizable, then for any point $x\in X$ and  neighborhood $O_x\subset X$ of $x$  there exists a continuous function $f:X\to [0,1]$ such that $f(x)=0$ and
$f^{-1}\big([0,1)\big)\subset \overline{O}_x^\circ$.
\end{corollary}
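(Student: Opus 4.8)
The plan is to deduce this immediately from Theorem~\ref{main} applied to the singleton $A=\{x\}$. Since $X$ is normally quasi-uniformizable, I would start by fixing a normal quasi-uniformity $\U$ that generates the topology of $X$, so that $\tau_\U$ coincides with the given topology and, for every point of $X$, the balls $\{B(x;U)\}_{U\in\U}$ form a neighborhood base. In particular, as $O_x$ is a neighborhood of $x$ in $\tau_\U$, there is an entourage $U\in\U$ with $B(x;U)\subset O_x$.

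Next I would invoke Theorem~\ref{main} with the non-empty set $A=\{x\}$ and this entourage $U$. It produces a continuous function $f:X\to[0,1]$ satisfying
$$\{x\}\subset f^{-1}(0)\subset f^{-1}\big([0,1)\big)\subset \overline{B(\{x\};U)}^\circ=\overline{B(x;U)}^\circ,$$
where the last equality just records that $B(\{x\};U)=B(x;U)$. The inclusion $\{x\}\subset f^{-1}(0)$ gives $f(x)=0$.

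Finally, monotonicity of closure and interior closes the gap: from $B(x;U)\subset O_x$ we get $\overline{B(x;U)}\subset\overline{O}_x$ and hence $\overline{B(x;U)}^\circ\subset\overline{O}_x^\circ$, so that $f^{-1}\big([0,1)\big)\subset\overline{O}_x^\circ$, as required. I do not expect any genuine obstacle here; the only point that needs care is the bookkeeping in the first paragraph — that the word ``neighborhood'' in the hypothesis is automatically a neighborhood in the generated topology $\tau_\U$, which is why a basic $U$-ball can be slipped inside $O_x$ so that Theorem~\ref{main} becomes applicable.
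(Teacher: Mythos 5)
Your proof is correct and is exactly the argument the paper intends (the paper leaves the corollary as an immediate consequence of Theorem~\ref{main}): pick an entourage $U\in\U$ with $B(x;U)\subset O_x$, apply Theorem~\ref{main} to $A=\{x\}$, and pass to $\overline{O}_x^\circ$ by monotonicity of closure and interior. Nothing is missing.
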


We will apply Corollary~\ref{cmain} to establish equivalences of certain separation axioms in normally quasi-uniformizable spaces.

To avoid a possible ambiguity, let us fix the definitions of some separation axioms.
\begin{definition}
A topological space $X$ is called
\begin{itemize}
\item[$T_0$:] a {\em $T_0$-space} if  for any distinct points $x,y\in X$ there is an open set $U\subset X$ containing exactly one of these points;
\item[$T_1$:] a {\em $T_1$-space} if for any distinct points $x,y\in X$ the point $x$ has a neighborhood $U_x\subset X$ such that $y\notin U_x$;
\item[$T_2$:] {\em Hausdorff\/}   if for any distinct points $x,y\in X$ the point $x$ has a neighborhood $U_x\subset X$ such that $y\notin\overline{U}_x$;
\item[$T_{\frac12 2}$:] {\em semi-Hausdorff\/} if for any distinct points $x,y\in X$ the point $x$ has a neighborhood $U_x\subset X$ such that $y\notin\overline{U}^\circ_x$;
\item[$T_{2\frac12}$:] {\em functionally Hausdorff\/}  if for any distinct points $x,y\in X$ there is a continuous function $f:X\to[0,1]$ such that $f(x)\ne f(y)$;
\item[$sM$:] {\em submetrizable} if $X$ admits a continuous metric;
\smallskip

\item[$R$:] {\em regular} if for any point $x\in X$ and a neighborhood $O_x\subset X$ of $x$ there is a neighborhood $U_x\subset X$ of $x$ such that $\overline{U}_x\subset O_x$;
\item[$\tfrac12 R$:] {\em semiregular} if for any point $x\in X$ and a neighborhood $O_x\subset X$ of $x$ there is a neighborhood $U_x\subset X$ of $x$ such that $\overline{U}^\circ_x\subset O_x$;
\item[$R\tfrac12$:] {\em completely regular} if for any point $x\in X$ and a neighborhood $O_x\subset X$ of $x$ there is continuous function $f:X\to[0,1]$ such that $f(x)=0$ and $f^{-1}\big([0,1)\big)\subset O_x$;
\item[$T_{3\frac12}$:] {\em Tychonoff\/} if $X$ is a completely regular $T_1$-space;
\item[$T_{3}$:] a {\em $T_3$-space} if $X$ is a regular $T_1$-space;
\item[$T_{\frac12 3}$:] a {\em $T_{\frac12 3}$-space} if $X$ is a semi-regular $T_1$-space.
\end{itemize}
\end{definition}
For any topological space these separation axioms relate as follows:
$$\xymatrix{
T_0&T_1\ar@{=>}[l]&T_{\frac12 2}\ar@{=>}[l]&T_{\frac12 3}\ar@{=>}[l]\ar@{=>}[r]&\tfrac12R\\
&&T_2\ar@{=>}[u]&T_3\ar@{=>}[l]\ar@{=>}[r]\ar@{=>}[u]&R\ar@{=>}[u]&\hskip-55pt.\\
&sM\ar@{=>}[r]&T_{2\frac12}\ar@{=>}[u]&T_{3\frac12}\ar@{=>}[l]\ar@{=>}[u]\ar@{=>}[r]&R\tfrac12\ar@{=>}[u]
}
$$
Known (or simple) examples show that none of the implications in this diagram can be reversed.
However for normally quasi-uniformizable spaces the situation changes dramatically.

\begin{theorem}\label{equi} If a topological space $X$ is normally quasi-uniformizable, then
\begin{enumerate}
\item $X$ is completely regular if and only if $X$ is regular if and only if $X$ is semiregular;
\item $X$ is functionally Hausdorff if and only if $X$ is Hausdorff if and only if $X$ is semi-Hausdorff.
\end{enumerate}
\end{theorem}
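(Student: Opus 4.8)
The plan is to prove both equivalences by exploiting Corollary~\ref{cmain}, which upgrades a ``semiregularity-type'' separation property into a ``complete-regularity-type'' property in any normally quasi-uniformizable space. Since the implications $R\tfrac12\Ra R\Ra\tfrac12R$ and $T_{2\frac12}\Ra T_2\Ra T_{\frac12 2}$ hold in every topological space (as recorded in the diagram), it suffices in~(1) to show that semiregularity implies complete regularity, and in~(2) that semi-Hausdorffness implies functional Hausdorffness.

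First I would prove~(1). Assume $X$ is semiregular and normally quasi-uniformizable. Fix a point $x\in X$ and a neighborhood $O_x$ of $x$. By semiregularity there is a neighborhood $U_x$ of $x$ with $\overline{U}^\circ_x\subset O_x$. Now apply Corollary~\ref{cmain} to the point $x$ and the neighborhood $U_x$: we obtain a continuous $f:X\to[0,1]$ with $f(x)=0$ and $f^{-1}\big([0,1)\big)\subset\overline{U}^\circ_x\subset O_x$. This is exactly the witness required for complete regularity, so $X$ is completely regular, and the chain of equivalences in~(1) closes.

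Next I would prove~(2) by the same device, applied pointwise. Assume $X$ is semi-Hausdorff and normally quasi-uniformizable, and let $x\neq y$ in $X$. Semi-Hausdorffness gives a neighborhood $U_x$ of $x$ with $y\notin\overline{U}^\circ_x$. Apply Corollary~\ref{cmain} to $x$ and $U_x$ to get a continuous $f:X\to[0,1]$ with $f(x)=0$ and $f^{-1}\big([0,1)\big)\subset\overline{U}^\circ_x$; since $y\notin\overline{U}^\circ_x$, the point $y$ satisfies $f(y)=1\neq 0=f(x)$. Hence $X$ is functionally Hausdorff, and the chain in~(2) closes.

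I do not expect a serious obstacle here: the real work has already been done in Theorem~\ref{main} and its Corollary~\ref{cmain}, and what remains is only to feed the hypotheses of semiregularity / semi-Hausdorffness into the corollary in the correct form and read off the conclusion. The one point requiring a little care is bookkeeping the trivial implications in the separation diagram so that each ``if and only if'' is genuinely a cycle of implications; in particular one should note explicitly that $f^{-1}\big([0,1)\big)\subset\overline{U}^\circ_x$ forces $f(y)=1$ whenever $y\notin\overline{U}^\circ_x$, which is what converts the set-theoretic containment into the numerical separation $f(x)\neq f(y)$.
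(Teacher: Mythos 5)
Your proposal is correct and is precisely the argument the paper intends when it says the theorem ``follows immediately from Corollary~\ref{cmain}'': you feed the semiregular (resp.\ semi-Hausdorff) neighborhood into the corollary and read off complete regularity (resp.\ functional Hausdorffness), the remaining implications being the trivial ones from the separation diagram. No difference in approach and no gaps.
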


This theorem follows immediately from Corollary~\ref{cmain}.

Taking into account that the topology of any completely regular space is generated by a uniformity (inherited from the Tychonoff power $\IR^{C(X)}$ where $C(X)$ is the set of all continuous real-valued functions on $X$) and applying Proposition~\ref{p:uT} and Theorem~\ref{equi},
we get the following characterization of completely regular spaces.

\begin{corollary} A semiregular topological space $X$ is completely regular if and only if $X$ is  normally quasi-uniformizable.
\end{corollary}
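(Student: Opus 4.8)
The plan is to prove the two implications separately; the ``if'' direction is immediate and almost all of the (light) work is in the ``only if'' direction. For the ``if'' direction, suppose $X$ is semiregular and normally quasi-uniformizable. Then Theorem~\ref{equi}(1) applies verbatim and yields that $X$ is completely regular; nothing more is needed.

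For the ``only if'' direction, suppose $X$ is completely regular. The goal is to produce a normal quasi-uniformity generating $\tau_X$, and by Proposition~\ref{p:uT} it suffices to produce a \emph{continuous uniformity} that generates $\tau_X$. First I would define, for each $f\in C(X)$ and each $\e>0$, the symmetric entourage $U_{f,\e}=\{(x,y)\in X\times X:|f(x)-f(y)|<\e\}$, and let $\U$ be the uniformity with base consisting of all finite intersections $U_{f_1,\e_1}\cap\dots\cap U_{f_n,\e_n}$; the triangle inequality gives $U_{f,\e/2}U_{f,\e/2}\subset U_{f,\e}$, so this is genuinely a uniformity (equivalently, $\U$ is the uniformity induced on $X$ by the evaluation map into the Tychonoff power $\IR^{C(X)}$ with its product uniformity). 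Next I would check that $\U$ is continuous: since each $f$ is continuous and $B(x;U_{f,\e})=f^{-1}\big((f(x)-\e,f(x)+\e)\big)$, every basic $\U$-ball is open in $X$, so $\tau_\U\subset\tau_X$. Finally I would use complete regularity to get the reverse inclusion: given a point $x$ and a neighborhood $O_x$ of $x$, pick $f\in C(X)$ with $f(x)=0$ and $f^{-1}\big([0,1)\big)\subset O_x$; then $B(x;U_{f,1})\subset f^{-1}\big((-1,1)\big)\subset O_x$, so $\tau_X\subset\tau_\U$. Thus $\tau_\U=\tau_X$, and Proposition~\ref{p:uT} makes $\U$ normal, witnessing that $X$ is normally quasi-uniformizable.

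The only obstacle worth flagging is ensuring that the constructed uniformity generates \emph{exactly} $\tau_X$ rather than some strictly coarser topology: this is precisely the point where complete regularity (and nothing weaker) is used, via the fact that the sets $f^{-1}\big([0,1)\big)$, $f\in C(X)$, form a neighborhood base of $X$. It is also worth noting that no separation axiom beyond those in the hypotheses enters the argument — in particular $T_1$ is never used — so the equivalence holds without any standing $T_0$-type assumption.
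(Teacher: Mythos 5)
Your proposal is correct and takes essentially the same route as the paper: the ``if'' direction is Theorem~\ref{equi}(1), and the ``only if'' direction uses the continuous uniformity inherited from the Tychonoff power $\IR^{C(X)}$ together with Proposition~\ref{p:uT}. You merely spell out explicitly the standard verification (which the paper leaves implicit) that this uniformity generates exactly the topology of $X$, complete regularity being used precisely at that point.
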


Now we will apply Theorem~\ref{main} to studying separation axioms in paratopological groups or, more generally, topological monoids with open shifts. Let us recall that a {\em paratopological group} is a group $G$ endowed with a topology making the group multiplication $G\times G\to G$, $(x,y)\mapsto xy$,  continuous.

A {\em topological monoid} is a topological semigroup $S$ possessing a (necessarily unique) two-sided unit $e\in S$. We shall say that a topological monoid $S$ has {\em open shifts} if for any elements $a,b\in S$ the two-sided shift $s_{a,b}:S\to S$, $s_{a,b}:x\mapsto axb$, is an open map.

\begin{remark}
It is clear that each paratopological group is a topological monoid with open shifts. The closed half-line $[0,\infty)$ endowed the Sorgenfrey topology (generated by the base $\mathcal B=\{[a,b):0\le a<b<\infty\}$) and the operation of addition of real numbers is a topological monoid with open shifts, which is not a (paratopological) group.
\end{remark}

Each topological monoid $S$ carries five natural quasi-uniformities:
\begin{itemize}
\item the {\em left quasi-uniformity} $\mathcal L$ generated by the base $\big\{\{(x,y)\in S\times S:y\in xU\}:U\in\mathcal N_e\big\}$;
\item the {\em right quasi-uniformity} $\mathcal R$ generated by the base $\big\{\{(x,y)\in S\times S:y\in Ux\}:U\in\mathcal N_e\big\}$;
\item the {\em two-sided quasi-uniformity} $\mathcal L\vee \mathcal R$ generated by the base $\big\{\{(x,y)\in S\times S:y\in Ux\cap xU\}:U\in\mathcal N_e\big\}$,
\item the {\em Roelcke quasi-uniformity} $\mathcal R\mathcal L$ generated by the base $\big\{\{(x,y)\in S\times S:y\in UxU\}:U\in\mathcal N_e\big\}$, and
\item the {\em quasi-Roelcke uniformity} $\FU=\mathcal R\mathcal L^{-1}\vee\mathcal L\mathcal R^{-1}$ generated by the base\newline $\big\{\{(x,y)\in S\times S:Ux\cap yU\ne\emptyset\ne Uy\cap xU\}:U\in\mathcal N_e\big\}$.
\end{itemize}
Here by $\mathcal N_e$ we denote the family of all open neighborhoods of the unit $e$ in $S$.

The quasi-uniformities $\mathcal L$, $\mathcal R$, $\mathcal L\vee\mathcal R$, and $\mathcal R\mathcal L$ are well-known in the theory of topological and paratopological groups (see \cite[Ch.2]{RD}, \cite[\S1.8]{AT}), whereas the uniformity $\FU$ seems to be news. It should be mentioned that on topological groups the quasi-Roelcke uniformity $\FU$ coincides with the Roelcke quasi-uniformity $\mathcal R\mathcal L$. The following diagram describes the relation between these five quasi-uniformities (an arrow $\U\to\V$ in the diagram indicates that $\U\subset\V$).
$$\xymatrix{
&\mathcal L\vee\mathcal R\\
\mathcal L\ar[ru]&\FU\ar[r]\ar[l]&\mathcal R\ar[lu]\\
&\mathcal R\mathcal L\ar[lu]\ar[ru]
}$$

If a topological monoid $S$ has open shifts, then the quasi-uniformities $\mathcal L$, $\mathcal R$, $\mathcal L\vee\mathcal R$ and $\mathcal R\mathcal L$ generate the original topology of $S$  (see \cite{Koper}, \cite{KMR}) whereas the quasi-Roelcke uniformity $\FU$ generates a topology $\tau_\FU$, which is (in general, strictly) weaker than the topology $\tau$ of $S$. If $S$ is a paratopological group, then the topology $\tau_\FU$ on $G$ coincides with the join $\tau_2\vee (\tau^{-1})_2$ of the second oscillator topologies considered by the authors in \cite{BR02}. The topology $\tau_\FU$ turns the paratopological group into a quasi-topological group, i.e., a group endowed with a topology in which the inversion and all shifts are continuous (see \cite[7.3]{BR15}).

Observe that in contrast to the classical quasi-uniformities $\mathcal L,\mathcal R,\mathcal R\mathcal L$, and $\mathcal L\vee\mathcal R$, the quasi-Roelcke uniformity $\FU=\FU^{-1}$ is symmetric and so is a uniformity.
By Proposition~\ref{p:uT}, the uniformity $\FU$ is normal (with respect to the topologies $\tau_\FU$ and $\tau$). A similar fact holds for the other three quasi-uniformities.

\begin{proposition}\label{p2} For any topological monoid $S$ with open shifts the quasi-uniformities $\mathcal L$, $\mathcal R$, and $\mathcal R\mathcal L$ are normal.
\end{proposition}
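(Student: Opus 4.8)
The plan is to verify the normality condition $\overline A\subseteq\overline{B(A;U)}^\circ$ for each of the three quasi-uniformities separately, where closures and interiors are taken in the original topology of $S$ (which, by the open shifts hypothesis, coincides with each of $\tau_{\mathcal L}$, $\tau_{\mathcal R}$ and $\tau_{\mathcal R\mathcal L}$). Since $B(A;U)$ and hence also $\overline{B(A;U)}^\circ$ increase with $U$, and every entourage of $\mathcal L$, $\mathcal R$ or $\mathcal R\mathcal L$ contains a basic one, it suffices to treat basic entourages. For $V\in\mathcal N_e$ the basic entourage of $\mathcal L$ gives $B(A;U)=AV$, that of $\mathcal R$ gives $B(A;U)=VA$, and that of $\mathcal R\mathcal L$ gives $B(A;U)=VAV$. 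So I would reduce the proposition to the three inclusions $\overline A\subseteq\overline{AV}^\circ$, $\overline A\subseteq\overline{VA}^\circ$ and $\overline A\subseteq\overline{VAV}^\circ$, to be established for an arbitrary subset $A\subseteq S$ and an arbitrary open neighbourhood $V$ of the unit $e$.

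The idea is to trap $\overline A$ between itself and the relevant closed ball by an explicitly open intermediate set. First I would record the elementary fact, valid in any topological monoid, that $\overline A\,b\subseteq\overline{Ab}$ and $b\,\overline A\subseteq\overline{bA}$ for every $b\in S$; this follows at once from the continuity of the translations $x\mapsto xb$ and $x\mapsto bx$, by pulling a neighbourhood of the image point back through the translation and intersecting it with $A$. Taking unions over $b\in V$ upgrades these to $\overline A V\subseteq\overline{AV}$ and $V\overline A\subseteq\overline{VA}$, and composing the two gives $V\overline A V\subseteq V\overline{AV}\subseteq\overline{VAV}$.

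Next I would bring in the open shifts hypothesis: for every $a\in S$ the translations $x\mapsto ax$ and $x\mapsto xa$ are open maps, so the sets $aV$, $Va$ and $VaV=\bigcup_{u\in V}u(aV)$ are open, and hence so are $\overline A V=\bigcup_{a\in\overline A}aV$, $V\overline A=\bigcup_{a\in\overline A}Va$ and $V\overline A V=\bigcup_{a\in\overline A}VaV$. Because $e\in V$ we have $\overline A=\overline A e\subseteq\overline A V$, $\overline A=e\overline A\subseteq V\overline A$ and $\overline A=e\overline A e\subseteq V\overline A V$. Chaining these with the inclusions of the previous paragraph places $\overline A$ inside an open set contained in the corresponding closed ball; as the interior of a set is its largest open subset, that open set lies inside $\overline{B(A;U)}^\circ$, which proves the three required inclusions and finishes the proof.

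All the manipulations are routine, so I expect the only points needing care to be bookkeeping: one must keep track of which side each translation acts on (there are no inverses available here, so one cannot simply assert that a closed set times an open set is closed — the correct replacements are $\overline A b\subseteq\overline{Ab}$ and the inclusions derived from it), and one must observe that it is the openness of the shifts, not merely the equality of the topologies $\tau=\tau_{\mathcal L}=\tau_{\mathcal R}=\tau_{\mathcal R\mathcal L}$, that makes the intermediate sets $\overline A V$, $V\overline A$ and $V\overline A V$ open — this is where the hypothesis on $S$ is genuinely used.
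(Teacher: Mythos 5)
Your proof is correct and uses essentially the same argument as the paper: continuity of shifts gives $\overline A V\subseteq\overline{AV}$ (and its mirror images), openness of shifts makes $\overline A V$, $V\overline A$, $V\overline A V$ open, and sandwiching yields the required inclusions into the interiors. The only cosmetic difference is the Roelcke case, which you handle directly via $V\overline A V$, whereas the paper simply observes that $\mathcal R\mathcal L\subset\mathcal L$, so normality of $\mathcal L$ (a condition quantified over all entourages) automatically transfers to the coarser quasi-uniformity $\mathcal R\mathcal L$.
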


\begin{proof} First we show that the left quasi-uniformity $\mathcal L$ is normal. Given a subset $A\subset S$ and a neighborhood $U\in\mathcal N_e$ we need to show that $\overline{A}\subset \overline{B(A,\mathcal L_U)}^\circ$ where $\mathcal L_U=\{(x,y)\in S\times S:y\in xU\}$. The continuity of (right) shifts in $S$ implies that $$\overline A\cdot U\subset \overline{AU}=\overline{B(A,\mathcal L_U)}.$$
Since the monoid $S$ has open (left) shifts, the set $\overline A\cdot U$ is open in $S$ and hence is contained in the interior $\overline{B(A,\mathcal L_U)}^\circ$ of $\overline{B(A,\mathcal L_U)}$. So, $\overline A\subset \overline A\cdot U\subset\overline{B(A,\mathcal L_U)}^\circ$ and hence the left quasi-uniformity $\mathcal L$ is normal.

By analogy we can prove the normality of the right quasi-uniformity $\mathcal R$ on $S$. The normality of the left quasi-uniformity $\mathcal L$ implies the normality of the Roelcke quasi-uniformity $\mathcal R\mathcal L\subset\mathcal L$.
\end{proof}

\begin{problem} Is the two-sided quasi-uniformity $\mathcal L\vee\mathcal R$ normal for any paratopological group (more generally, any topological monoid with open shifts)?
\end{problem}

Now we shall apply Theorem~\ref{main} and prove the following normality-type property of topological monoids with open shifts.

\begin{theorem}\label{para} Let $S$ be a topological monoid with open shifts. For any neighborhood $U\subset S$ of the unit $e$ of $S$ and any subset $A\subset S$ there exists a continuous function $f:S\to [0,1]$ such that $f(A)\subset \{0\}$ and $f^{-1}\big([0,1)\big)\subset \overline{AU}^\circ\cap \overline{U\kern-1pt A}^\circ$.
\end{theorem}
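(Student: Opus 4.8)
The plan is to reduce the statement to two applications of Theorem~\ref{main} — one for the left quasi-uniformity $\mathcal L$ and one for the right quasi-uniformity $\mathcal R$ — and then to glue the two resulting functions together with $\max$. First note that if $A=\emptyset$ the constant function $f\equiv 1$ works, since then $\overline{AU}^\circ$ and $\overline{UA}^\circ$ are empty and the condition $f(A)\subset\{0\}$ holds vacuously; so from now on we may assume $A\ne\emptyset$.

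Fix an open neighborhood $V\subset U$ of the unit $e$, so that $\overline{AV}^\circ\subset\overline{AU}^\circ$ and $\overline{VA}^\circ\subset\overline{UA}^\circ$, and consider the basic entourages $\mathcal L_V=\{(x,y)\in S\times S:y\in xV\}\in\mathcal L$ and $\mathcal R_V=\{(x,y)\in S\times S:y\in Vx\}\in\mathcal R$. A direct computation of $U$-balls gives $B(A;\mathcal L_V)=\bigcup_{a\in A}aV=AV$ and, likewise, $B(A;\mathcal R_V)=VA$. By Proposition~\ref{p2} the quasi-uniformities $\mathcal L$ and $\mathcal R$ on $S$ are normal with respect to the original topology $\tau$ of $S$, so Theorem~\ref{main} is applicable to each of them (and the functions it produces are continuous with respect to $\tau$).

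Applying Theorem~\ref{main} to the normal quasi-uniformity $\mathcal L$, the non-empty set $A$ and the entourage $\mathcal L_V$ yields a continuous function $f_L\colon S\to[0,1]$ with $A\subset f_L^{-1}(0)$ and $f_L^{-1}\big([0,1)\big)\subset\overline{B(A;\mathcal L_V)}^\circ=\overline{AV}^\circ$. Applying it to $\mathcal R$, $A$ and $\mathcal R_V$ yields a continuous $f_R\colon S\to[0,1]$ with $A\subset f_R^{-1}(0)$ and $f_R^{-1}\big([0,1)\big)\subset\overline{VA}^\circ$. Then $f:=\max\{f_L,f_R\}\colon S\to[0,1]$ is continuous, satisfies $f(A)\subset\{0\}$, and $f^{-1}\big([0,1)\big)=f_L^{-1}\big([0,1)\big)\cap f_R^{-1}\big([0,1)\big)\subset\overline{AV}^\circ\cap\overline{VA}^\circ\subset\overline{AU}^\circ\cap\overline{UA}^\circ$, which is exactly what is required.

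The one conceptual point — and the reason a single application of Theorem~\ref{main} does not work directly — is that we cannot use the two-sided quasi-uniformity $\mathcal L\vee\mathcal R$: it is not known to be normal (this is the preceding Problem), and in any case its $U$-balls $\bigcup_{a\in A}(Va\cap aV)$ are in general strictly smaller than $AV\cap VA$, so they would not give the target sets $\overline{AU}^\circ$ and $\overline{UA}^\circ$. Splitting into the two one-sided quasi-uniformities, each of which \emph{is} normal by Proposition~\ref{p2}, and recombining with $\max$ circumvents this; the remaining bookkeeping (the $U$-ball identities, the elementary fact that $\max$ of two continuous $[0,1]$-valued functions is continuous, and keeping track that all closures, interiors and continuity are taken in $\tau$ rather than in $\tau_\FU$) is routine.
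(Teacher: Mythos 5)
Your proof is correct and follows essentially the same route as the paper: apply Theorem~\ref{main} twice, to the quasi-uniformities $\mathcal L$ and $\mathcal R$ (normal by Proposition~\ref{p2}) with the entourages determined by $U$, obtaining $f_L$ and $f_R$, and then combine them into one function. The only difference is that you glue with $\max\{f_L,f_R\}$ where the paper uses the product $f_L\cdot f_R$; your choice is in fact the cleaner one, since $f_L(z)f_R(z)<1$ does not by itself force both factors to be $<1$ (so the product only immediately gives containment in $\overline{AU}^\circ\cup\overline{UA}^\circ$, while $\max$ yields the intersection directly), and your separate treatment of the case $A=\emptyset$, which Theorem~\ref{main} formally excludes, is a welcome extra bit of care.
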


\begin{proof} Let $\mathcal L_U=\{(x,y)\in S\times S:y\in xU\}\in \mathcal L$ and $\mathcal R_U=\{(x,y)\in S\times S:y\in Ux\}\in \mathcal R$ be the entourages determined by the neighborhood $U$. Applying Theorem~\ref{main} to the normal quasi-uniformities $\mathcal L$ and $\mathcal R$ on $S$, we obtain two continuous functions $f_L,f_R:S\to[0,1]$ such that
$$A\subset f_L^{-1}(0)\subset f^{-1}_L\big([0,1)\big)\subset \overline{B(A,\mathcal L_U)}^\circ=\overline{AU}^\circ\mbox{ \ and \ }A\subset f_R^{-1}(0)\subset f^{-1}_R\big([0,1)\big)\subset \overline{B(A,\mathcal R_U)}^\circ=\overline{UA}^\circ.$$
Then the function $f:=f_L\cdot f_R:S\to[0,1]$ is continuous and has the required property:
$$A\subset f^{-1}(0)\subset f^{-1}\big([0,1)\big)\subset \overline{AU}^\circ\cap \overline{UA}^\circ.$$
\end{proof}

Applying Theorem~\ref{para} to a singleton $A=\{x\}\subset S$, we get the following corollary.

\begin{corollary}\label{mono} Let $S$ be a topological monoid with open shifts. For any point $x\in S$ and neighborhood $O_x\subset S$ of $x$ there exists a continuous function $f:S\to [0,1]$ such that $f(x)=0$ and $f^{-1}\big([0,1)\big)\subset \overline{O}_x^\circ$.
\end{corollary}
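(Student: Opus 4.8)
\emph{Proof proposal.} The plan is to derive this as a direct specialization of Theorem~\ref{para} to a one-point set. As a harmless preliminary I would replace the given neighborhood $O_x$ by its interior $\intr_S O_x$; this can only shrink $O_x$, hence shrinks $\overline{O}_x^\circ$ as well (both $\cl_S$ and $\intr_S$ are monotone), so it suffices to treat the case where $O_x$ is open. Using the continuity of the monoid multiplication at the pair $(x,e)$ — equivalently, the continuity at $e$ of the left shift $y\mapsto xy=s_{x,e}(y)$, which carries $e$ to $x\in O_x$ — I would choose an open neighborhood $U\in\mathcal N_e$ of the unit $e$ with $xU\subset O_x$.

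Next I would apply Theorem~\ref{para} to the singleton $A=\{x\}$ and this neighborhood $U$. This yields a continuous function $f:S\to[0,1]$ with $f(\{x\})\subset\{0\}$, i.e. $f(x)=0$, and with
$$f^{-1}\big([0,1)\big)\subset \overline{xU}^\circ\cap\overline{Ux}^\circ\subset\overline{xU}^\circ .$$
To finish, I would invoke the inclusion $xU\subset O_x$ chosen above: monotonicity of the closure gives $\cl_S(xU)\subset\cl_S(O_x)$, and monotonicity of the interior then gives $\overline{xU}^\circ=\intr_S\cl_S(xU)\subset\intr_S\cl_S(O_x)=\overline{O}_x^\circ$. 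Combining this with the displayed inclusion yields $f^{-1}\big([0,1)\big)\subset\overline{O}_x^\circ$, which is exactly the asserted property of $f$.

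As for the difficulty, there is essentially no obstacle: once Theorem~\ref{para} is available, the corollary is a routine specialization, and the only point requiring a (one-line) verification is the passage from $\overline{xU}^\circ$ to $\overline{O}_x^\circ$, which is pure monotonicity of the closure and interior operators together with the choice $xU\subset O_x$. I would also remark that the full two-sided conclusion of Theorem~\ref{para} is not needed here — applying Theorem~\ref{main} to the normal left quasi-uniformity $\mathcal L$ alone (Proposition~\ref{p2}) would already produce a suitable $f$ — but citing Theorem~\ref{para} as a black box keeps the argument shortest.
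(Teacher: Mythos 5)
Your proposal is correct and matches the paper's (one-line) proof, which likewise obtains the corollary by applying Theorem~\ref{para} to the singleton $A=\{x\}$ with a neighborhood $U\in\mathcal N_e$ chosen so that $xU\subset O_x$; your extra details (continuity of the shift at $e$ and monotonicity of closure and interior) are exactly the routine verifications the paper leaves implicit.
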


It its turn, this corollary implies the following two characterizations.

\begin{corollary}\label{monoreg} A topological monoid $S$ with open shifts is:
\begin{enumerate}
\item completely regular if and only if $S$ is regular if and only if $S$ is semiregular;
\item functionally Hausdorff if and only if $S$ is Hausdorff if and only if $S$ is semi-Hausdorff.
\end{enumerate}
\end{corollary}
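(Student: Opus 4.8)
The plan is to establish each of the two equivalences by closing a short cycle of implications, in which the only nontrivial step is supplied by Corollary~\ref{mono}. Recall from the diagram of separation axioms that, for \emph{every} topological space, complete regularity implies regularity, which in turn implies semiregularity, and likewise functional Hausdorffness implies Hausdorffness, which implies semi-Hausdorffness (these are the arrows $R\tfrac12\Ra R\Ra\tfrac12R$ and $T_{2\frac12}\Ra T_2\Ra T_{\frac12 2}$, verified by the usual elementary arguments: a Urysohn function at $(x,O_x)$ produces, via the sublevel set $f^{-1}([0,\tfrac12))$, a neighborhood whose closure lies in $O_x$, and $\overline{U}_x^\circ\subset\overline{U}_x$ always). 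Consequently, for part~(1) it suffices to prove that a \emph{semiregular} topological monoid $S$ with open shifts is completely regular, and for part~(2) that a \emph{semi-Hausdorff} such monoid is functionally Hausdorff.

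For part~(1), I would fix an arbitrary point $x\in S$ and a neighborhood $O_x\subset S$ of $x$. By semiregularity there is a neighborhood $U_x$ of $x$ with $\overline{U}_x^\circ\subset O_x$. Applying Corollary~\ref{mono} to the point $x$ and the neighborhood $U_x$ yields a continuous function $f:S\to[0,1]$ with $f(x)=0$ and $f^{-1}\big([0,1)\big)\subset\overline{U}_x^\circ\subset O_x$. This is precisely the witness required by the definition of complete regularity at $(x,O_x)$, so, as $x$ and $O_x$ were arbitrary, $S$ is completely regular.

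For part~(2), I would fix two distinct points $x,y\in S$. Semi-Hausdorffness provides a neighborhood $U_x$ of $x$ with $y\notin\overline{U}_x^\circ$. Applying Corollary~\ref{mono} to $x$ and $U_x$ gives a continuous function $f:S\to[0,1]$ with $f(x)=0$ and $f^{-1}\big([0,1)\big)\subset\overline{U}_x^\circ$. Since $y\notin\overline{U}_x^\circ$, the point $y$ does not belong to $f^{-1}\big([0,1)\big)$, hence $f(y)=1\neq 0=f(x)$; thus $f$ separates $x$ and $y$ and $S$ is functionally Hausdorff.

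There is essentially no obstacle to overcome: once Corollary~\ref{mono} is in hand, both implications are one-line applications, and the only thing deserving care is the bookkeeping of the already-recorded diagram implications, which guarantees that closing these cycles through Corollary~\ref{mono} really does deliver the full chain of equivalences in each of the two parts.
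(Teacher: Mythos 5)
Your proposal is correct and follows exactly the route the paper intends: the nontrivial halves of both cycles are immediate applications of Corollary~\ref{mono} (semiregular $\Rightarrow$ completely regular, semi-Hausdorff $\Rightarrow$ functionally Hausdorff), while the remaining implications are the elementary arrows from the separation-axiom diagram. The paper leaves this verification implicit, and your write-up supplies precisely those details with no gaps.
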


Since each paratopological group is a topological monoid with open shift, we can apply Corollary~\ref{monoreg} to paratopological groups and obtain the following two characterizations.
The first of them answers a (more than 50 years) old problem in the theory of paratopological groups (see \cite[Question 1.2]{Rav1}, \cite[Problem 3.1.1]{AT}, \cite[Problem 4.1]{Tka2}, and \cite[Problem 2.1]{Tka}).

\begin{corollary}\label{r1} A paratopological group $G$ is completely regular if and only if $G$ is regular if and only if $G$ is semiregular.
\end{corollary}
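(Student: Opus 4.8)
The plan is straightforward given the machinery already in place: Corollary~\ref{r1} should be an immediate instance of Corollary~\ref{monoreg}, so the real work was done in the preceding results and the only thing left is to verify that a paratopological group fits the hypotheses of Corollary~\ref{monoreg}. Since the \textbf{Remark} already records that each paratopological group is a topological monoid with open shifts, I would simply invoke that observation and quote Corollary~\ref{monoreg}(1). That is, the proof is one sentence: ``A paratopological group $G$ is a topological monoid with open shifts, so the equivalences follow from Corollary~\ref{monoreg}(1).''

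If one wanted to be slightly more self-contained, I would first spell out \emph{why} a paratopological group has open shifts. For $a,b\in G$ the two-sided shift $s_{a,b}\colon x\mapsto axb$ is a bijection with inverse $x\mapsto a^{-1}xb^{-1}$; since multiplication on each side by a fixed element is continuous in a paratopological group, both $s_{a,b}$ and its inverse are continuous, hence $s_{a,b}$ is a homeomorphism and in particular an open map. Also $G$ is a topological monoid because the group multiplication is (jointly) continuous and the neutral element $e$ is a two-sided unit. With these two facts noted, Corollary~\ref{monoreg}(1) applies verbatim.

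The only conceivable subtlety — and it is not really an obstacle — is to make sure the notion of ``open shifts'' used in Corollary~\ref{monoreg} (which ultimately feeds into Propositions~\ref{p:uT} and \ref{p2} through the quasi-uniformities $\mathcal L$, $\mathcal R$) is exactly the two-sided-shift condition $s_{a,b}$ open, and that this is what a paratopological group supplies; the computation above confirms it. There is no genuine ``hard part'' here: all the content lives in Theorem~\ref{main}, Corollary~\ref{cmain}/\ref{mono}, and Proposition~\ref{p2}, and Corollary~\ref{r1} is a pure specialisation. So I would keep the proof to a single short paragraph, essentially: ``Each paratopological group is a topological monoid with open shifts (see the Remark above), so the claim is a partial case of Corollary~\ref{monoreg}.''
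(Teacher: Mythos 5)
Your proposal is correct and coincides with the paper's own argument: the paper likewise derives Corollary~\ref{r1} by noting that each paratopological group is a topological monoid with open shifts and applying Corollary~\ref{monoreg}. Your extra check that the two-sided shifts $s_{a,b}$ are homeomorphisms (with inverse $x\mapsto a^{-1}xb^{-1}$) is a harmless and accurate elaboration of the Remark the paper relies on.
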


We mention that the regularity of semi-regular paratopological groups was first proved by Ravsky \cite{Rav2}.
The other characterization solves an open problem in \cite[Problem 2.4]{Tka}.

\begin{corollary}\label{h1} A paratopological group $G$ is functionally Hausdorff if and only if $G$ is Hausdorff if and only if $G$ is semi-Hausdorff.
\end{corollary}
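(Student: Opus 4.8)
The plan is to obtain Corollary~\ref{h1} as a special case of Corollary~\ref{monoreg}(2). The first step is the observation that every paratopological group $G$ is a topological monoid with open shifts: the group unit is a two-sided unit for the (continuous) multiplication, and for any $a,b\in G$ the two-sided shift $s_{a,b}\colon x\mapsto axb$ is a homeomorphism of $G$ with inverse $s_{a^{-1},b^{-1}}$, since left and right translations in a paratopological group are homeomorphisms. Hence all the hypotheses of Corollary~\ref{monoreg} are met by $G$, and part (2) of that corollary is literally the assertion to be proved. So, modulo the machinery already developed, there is nothing left to do.

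If one instead wanted to see the argument unwound, the only non-trivial implication is \emph{semi-Hausdorff $\Rightarrow$ functionally Hausdorff} (the implications functionally Hausdorff $\Rightarrow$ Hausdorff $\Rightarrow$ semi-Hausdorff hold in every topological space, as recorded in the diagram of separation axioms). To prove it, fix distinct points $x,y\in G$. By semi-Hausdorffness the point $x$ has a neighborhood $O_x$ with $y\notin\overline{O}_x^\circ$. Applying Corollary~\ref{mono} (equivalently, Theorem~\ref{para} with the singleton $A=\{x\}$) to $x$ and $O_x$, we get a continuous function $f\colon G\to[0,1]$ with $f(x)=0$ and $f^{-1}\big([0,1)\big)\subset\overline{O}_x^\circ$. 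Since $y\notin\overline{O}_x^\circ$, we conclude $f(y)=1\ne 0=f(x)$, which witnesses that $G$ is functionally Hausdorff.

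The substantive work therefore lies entirely upstream and the corollary itself is a one-line deduction. The real content is in Proposition~\ref{p2}, where the open-shifts hypothesis is used to see that $\overline{A}\cdot U$ is open and hence that the left and right quasi-uniformities $\mathcal L$ and $\mathcal R$ on $G$ are normal, and in Theorem~\ref{main}, whose Urysohn-style inductive construction of the entourages $U^r$ ($r\in\IQ_2$) and of the function $f$ produces the separating map from any normal quasi-uniformity. If I were reproving this from scratch, the main obstacle would be establishing the monotonicity claim $\overline{B}(A;U^q)\subset\overline{B}^\circ(A;U^r)$ for $q<r$ in $\IQ_2$ — this is the single point where normality of $\U$ is invoked and where the bookkeeping with binary expansions and the relations $U_nU_n\subset U_{n-1}$ must be handled carefully — but given Theorem~\ref{main} and Corollary~\ref{mono} as already stated, the proof of Corollary~\ref{h1} is complete after the two paragraphs above.
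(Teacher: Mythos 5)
Your proposal is correct and follows the paper's own route: the paper likewise deduces Corollary~\ref{h1} by observing that a paratopological group is a topological monoid with open shifts and then invoking Corollary~\ref{monoreg}(2), which rests on Corollary~\ref{mono}, Theorem~\ref{para}, Proposition~\ref{p2} and Theorem~\ref{main}. Your unwinding of the implication semi-Hausdorff $\Rightarrow$ functionally Hausdorff via Corollary~\ref{mono} is exactly the intended argument.
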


Corollaries~\ref{r1} and \ref{h1} (and Theorem~\ref{equi}) show that for paratopological groups (more generally, for normally quasi-uniformizable topological spaces) the diagram describing the relation between various separation axioms transforms to the following symmetric form:
$$\xymatrix{
&&T_{\frac12 2}\ar@{<=>}[d]&T_{\frac12 3}\ar@{=>}[l]\ar@{<=>}[d]\ar@{=>}[r]&\tfrac12R\\
T_0&T_1\ar@{=>}[l]&T_2\ar@{=>}[l]&T_3\ar@{=>}[l]\ar@{=>}[r]&R\ar@{<=>}[u]\ar@{<=>}[d]&\hskip-55pt.\\
&&T_{2\frac12}\ar@{<=>}[u]&T_{3\frac12}\ar@{=>}[l]\ar@{<=>}[u]\ar@{=>}[r]&R\tfrac12\\
}
$$
\smallskip

Simple examples presented in \cite[Section 1]{Rav2} or  \cite[Section 2]{Tka} show that for paratopological groups the separation axioms $T_i$, $i\in\{0,1,2,3\}$, are pairwise non-equivalent. On the other hand, for topological groups the separation axiom $T_0$ is equivalent to $T_{3\frac12}$ and consequently to all other separation axioms $T_i$, $i\in\{1,2,\frac122,2\frac12,\frac123,3,3\frac12\}$. The real line endowed with the anti-discrete topology is a regular topological group which fails to satisfy the separation axiom $T_0$.
\smallskip

In fact, the first parts of Corollaries~\ref{h1} and \ref{monoreg}(2) admit an elementary proof exploiting the quasi-Roelcke uniformity. Let $S$ be a Hausdorff topological monoid. We shall say that $S$ is {\em countably Hausdorff\/} if there is a countable family $\U$ of neighborhoods of the unit $e\in G$ such that for any distinct points $x,y\in G$ there is a neighborhood $U\in\U$ such that $Ux\cap yU=\emptyset$ or $xU\cap Uy=\emptyset$. It is clear that each first-countable Hausdorff topological monoid is countably Hausdorff.

The following theorem yields a simple alternative proof of the first parts of Corollaries~\ref{monoreg}(2) and \ref{h1}.

\begin{theorem}\label{Rolk} Any (countably) Hausdorff topological monoid $S$ with open shifts is functionally Hausdorff (and submetrizable).
\end{theorem}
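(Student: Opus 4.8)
The plan is to exploit the quasi-Roelcke uniformity $\FU=\mathcal R\mathcal L^{-1}\vee\mathcal L\mathcal R^{-1}$, which by Proposition~\ref{p:uT} is a \emph{normal} (continuous) uniformity on $S$, since $\FU$ is symmetric. First I would dispose of the case of a Hausdorff monoid $S$ that need not be countably Hausdorff: here the functional-Hausdorffness follows already from Corollary~\ref{monoreg}(2) (or can be re-derived by applying Corollary~\ref{cmain} with the normal quasi-uniformity $\mathcal L$), so the substantive new content of the theorem is the \emph{submetrizability} under the countable Hausdorff hypothesis. So assume $S$ is countably Hausdorff and fix a countable family $\U=\{U_n:n\in\IN\}\subset\mathcal N_e$ separating points in the sense of the definition: for distinct $x,y$ some $U_n$ satisfies $U_nx\cap yU_n=\emptyset$ or $xU_n\cap U_ny=\emptyset$.

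Next, for each $n$ let $Q_n=\{(x,y):U_nx\cap yU_n\ne\emptyset\ne U_ny\cap xU_n\}$ be the basic entourage of $\FU$ determined by $U_n$; these form a \emph{countable} base for a uniformity $\FU'\subset\FU$. The point of the countable Hausdorff hypothesis is exactly that $\FU'$ is \emph{separated}: if $x\ne y$, the witnessing $U_n$ forces $(x,y)\notin Q_n$, so $\bigcap_n Q_n=\Delta_S$. A uniformity with a countable base is pseudometrizable (Weil's metrization theorem for uniformities), and a separated pseudometric is a metric; hence $\FU'$ is generated by a metric $d$ on $S$. The remaining — and only mildly technical — step is to observe that $d$ is \emph{continuous} on $S\times S$ (where $S$ carries its original topology $\tau$): this is because each $Q_n$ is a $\tau$-neighborhood of the diagonal. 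Indeed, since $S$ has open shifts, the sets $x\mapsto U_nx$ and $x\mapsto xU_n$ are open, and $Q_n$ contains the $\tau\times\tau$-open set $\{(x,y):y\in U_nx\cap xU_n\}\supset\Delta_S$; thus the $\FU'$-topology, and so $d$, is coarser than $\tau$, and $d:S\times S\to[0,\infty)$ is $\tau$-continuous. A continuous metric witnesses submetrizability, and submetrizability implies functional Hausdorffness (indeed $z\mapsto d(x,z)$ separates $x$ from every $y\ne x$), completing the proof.

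I expect the main obstacle to be purely bookkeeping: verifying that the countably many basic entourages $Q_n$ actually form a base of a uniformity — i.e. checking the composition condition $\exists m\ (Q_m Q_m\subset Q_n)$ and the intersection condition — rather than merely a subbase. The composition estimate is the heart of it and amounts to the computation already implicit in the paper's diagram relating $\FU$ to $\mathcal L,\mathcal R,\mathcal R\mathcal L$: one picks $U_m$ with $U_m U_m\subset U_n$ (using continuity of multiplication at $e$) and possibly refines $\U$ by closing it under this operation, which keeps it countable. Once the $Q_n$ genuinely form a countable uniformity base, Weil metrization and the continuity observation are routine.

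Alternatively, and perhaps more transparently, one can bypass abstract metrization: enumerate pairs and for each $n$ apply Corollary~\ref{mono} (with the neighborhood $U_n$ of the unit) to produce continuous functions $f_n,g_n:S\to[0,1]$ with $f_n(e)=g_n(e)=0$ whose supports are controlled by $U_n$; translating these via the open shifts and forming $d(x,y)=\sum_n 2^{-n}\min\{1,|h_n(x)-h_n(y)|\}$ for a suitable countable separating family $(h_n)$ of continuous functions (whose existence is guaranteed by countable Hausdorffness together with functional Hausdorffness from Corollary~\ref{monoreg}(2)) gives a continuous metric directly. I would present the $\FU'$-argument as the main line and mention this second route only if space permits, since the uniformity-theoretic argument makes the role of the quasi-Roelcke uniformity most vivid.
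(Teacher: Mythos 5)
Your proposal is correct and follows essentially the same route as the paper: both exploit the quasi-Roelcke uniformity $\FU$, whose separatedness under (countable) Hausdorffness together with the metrization theorem for uniform spaces yields a continuous metric, with continuity relative to the original topology coming from the openness of shifts (your check that $U_mU_m\subset U_n$ forces $Q_mQ_m\subset Q_n$ is precisely the composition estimate that the paper delegates to Engelking's metrization lemma applied inside $\FU$, and it does go through in a monoid). The only cosmetic difference is that for the merely Hausdorff case you invoke Corollary~\ref{monoreg}(2), whereas the paper deduces functional Hausdorffness directly from the Tychonoff property of the separated uniform topology $\tau_\FU$ and the continuity of the identity map of $S$ onto $(S,\tau_\FU)$.
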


\begin{proof} Since $S$ is Hausdorff, the quasi-Roelcke uniformity $\FU$ is separated and hence induces a Tychonoff topology $\tau_\FU$ on $S$. Since $S$ has open shifts, the identity map $\mathrm{id}:S\to (S,\tau_\FU)$ is continuous, which implies that the space $S$ is functionally Hausdorff.

Now assume that the topological monoid $S$ is countably Hausdorff. For every neighborhood $V\in\mathcal N_e$ of the unit $e$, consider the basic entourage
$$\fe[V]=\{(x,y)\in S\times S:Vx\cap yV\ne\emptyset\ne Vy\cap xV\}$$of the quasi-Roelcke uniformity $\FU$. Since $S$ is countably Hausdorff, we can choose a countable family $\U$ of neighborhoods of the unit $e\in S$ such that  $\bigcap_{V\in\U}\fe[V]=\Delta_X$ and hence the uniform space $(S,\FU )$ admits a uniform (and thus continuous) metric $d:S\times S\to S$ (see Theorems~8.1.10 and 8.1.21 in \cite{Eng}). This implies that the space $S$ is submetrizable.
\end{proof}

Since each first-countable Hausdorff topological monoid is  countably Hausdorff, Theorem~\ref{Rolk} implies the following corollary.

\begin{corollary}\label{last} Any first-countable Hausdorff topological monoid with open shifts is submetrizable.
\end{corollary}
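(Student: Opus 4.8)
The plan is to deduce Corollary~\ref{last} directly from Theorem~\ref{Rolk}. That theorem already shows that every \emph{countably} Hausdorff topological monoid with open shifts is submetrizable, so all that remains is to verify that a first-countable Hausdorff topological monoid $S$ (with open shifts) is countably Hausdorff, i.e.\ to exhibit a \emph{countable} family $\U\subset\mathcal N_e$ separating any pair of distinct points in the sense of the definition of countable Hausdorffness.

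First I would fix a countable neighborhood base $\{U_n:n\in\IN\}\subset\mathcal N_e$ at the unit $e$; such a base exists precisely because $S$ is first-countable. I claim that the family $\U=\{U_n:n\in\IN\}$ witnesses countable Hausdorffness of $S$. To see this, take any two distinct points $x,y\in S$. Since $S$ is Hausdorff, the quasi-Roelcke uniformity $\FU$ is separated --- this is exactly the first observation in the proof of Theorem~\ref{Rolk} --- so $(x,y)\notin\bigcap\FU=\Delta_S$. Hence there is a neighborhood $V\in\mathcal N_e$ with $(x,y)\notin\FU_V$, where $\FU_V=\{(x,y)\in S\times S:Vx\cap yV\ne\emptyset\ne Vy\cap xV\}$; that is, $Vx\cap yV=\emptyset$ or $Vy\cap xV=\emptyset$.

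Next I would pass from the arbitrary separating neighborhood $V$ to a member of the fixed countable base: choose $n\in\IN$ with $U_n\subset V$. Then $U_nx\subset Vx$, $yU_n\subset yV$, $U_ny\subset Vy$ and $xU_n\subset xV$, so the relevant intersections only shrink, and we still have $U_nx\cap yU_n=\emptyset$ or $U_ny\cap xU_n=\emptyset$. Thus $\U$ separates $x$ and $y$, and since $x,y$ were arbitrary, $S$ is a countably Hausdorff topological monoid with open shifts. Theorem~\ref{Rolk} then gives that $S$ is submetrizable, which is the assertion of Corollary~\ref{last}.

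There is essentially no genuine obstacle in this argument: all the substantive content --- producing a continuous (indeed uniform) metric from a countably generated separating uniformity by way of the metrization theorem for uniform spaces (Theorems~8.1.10 and 8.1.21 in \cite{Eng}) --- is already carried out inside the proof of Theorem~\ref{Rolk}. The only point needing any attention is checking that replacing a separating $V\in\mathcal N_e$ by a smaller basic neighborhood $U_n$ preserves the emptiness of the relevant one-sided translates, which is immediate from the monotonicity of $A\mapsto Ax$ and $A\mapsto xA$ under inclusion.
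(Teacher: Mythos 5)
Your proof is correct and follows the paper's own route: the paper deduces the corollary from Theorem~\ref{Rolk} by observing (as ``clear'') that a first-countable Hausdorff topological monoid is countably Hausdorff, which is exactly the reduction you carry out, merely spelling out the routine verification (separatedness of $\FU$ for Hausdorff $S$ plus shrinking the separating neighborhood to a member of a countable base at $e$).
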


Since each paratopological group is a topological monoid with open shifts, Theorem~\ref{Rolk} implies another corollary, which answers affirmatively Problem 3.11 in \cite{AT} (repeated as Problem 8.7 in \cite{Tka}) and Problem 2.21 in \cite{IS}, and also generalizes many results on the submetrizability of paratopological groups (see, \cite[\S8]{Tka}, \cite{LL}, \cite{XL}, \cite{IS}).

\begin{corollary}\label{last} Any (countably) Hausdorff paratopological group is functionally Hausdorff (and submetrizable).
\end{corollary}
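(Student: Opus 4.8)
The plan is to derive Corollary~\ref{last} directly from Theorem~\ref{Rolk} by checking only that the hypotheses transfer. First I would recall that every paratopological group $G$ is, by the Remark preceding Theorem~\ref{para}, a topological monoid with open shifts: the unit is the group identity, the multiplication is continuous by definition of a paratopological group, and each two-sided shift $s_{a,b}\colon x\mapsto axb$ is a homeomorphism (with inverse $x\mapsto a^{-1}xb^{-1}$), hence in particular an open map. So the structural hypothesis of Theorem~\ref{Rolk} is automatic.

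Next I would address the separation hypothesis. If $G$ is Hausdorff as a topological space, then it is a Hausdorff topological monoid, so the first assertion of Theorem~\ref{Rolk} applies verbatim and gives that $G$ is functionally Hausdorff. For the parenthetical ``countably Hausdorff'' version one should observe that a first-countable Hausdorff paratopological group is countably Hausdorff: fixing a countable neighbourhood base $\{V_n\}_{n\in\omega}$ at the unit $e$, Hausdorffness of $G$ together with continuity of multiplication yields, for any two distinct points $x,y$, some $n$ with $V_nx\cap yV_n=\emptyset$ (indeed separating $x$ and $y$ by open sets and shrinking, one gets even $V_nx\cap V_ny=\emptyset$, which is stronger than either of the two alternatives in the definition). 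Thus the countable family $\{V_n\}$ witnesses countable Hausdorffness, and Theorem~\ref{Rolk} then yields that $G$ is submetrizable. Strictly, this last point is already recorded in the excerpt as ``each first-countable Hausdorff topological monoid is countably Hausdorff,'' so it may simply be cited.

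Finally I would assemble the statement: combining the two applications of Theorem~\ref{Rolk}, a (countably) Hausdorff paratopological group is functionally Hausdorff, and a countably Hausdorff one is submetrizable; since first-countable Hausdorff implies countably Hausdorff, the announced generalisations of the known submetrizability results follow. I do not expect a genuine obstacle here: the corollary is a specialisation, and the only things to verify are the two routine transfers (open shifts; and Hausdorff $\Rightarrow$ Hausdorff monoid, first-countable Hausdorff $\Rightarrow$ countably Hausdorff). If there is any subtlety, it is purely bookkeeping --- making sure that ``Hausdorff'' for the space and ``Hausdorff'' for the topological monoid mean the same thing in this setting, which they do, since the $T_2$ axiom is a property of the underlying topological space alone.
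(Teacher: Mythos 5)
Your proposal is correct and follows the paper's own route exactly: the corollary is obtained by noting that every paratopological group is a topological monoid with open shifts (shifts are homeomorphisms) and then applying Theorem~\ref{Rolk} verbatim. The extra remark that first-countable Hausdorff implies countably Hausdorff is harmless bookkeeping already recorded in the paper.
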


Theorem~\ref{Rolk} and Corollary~\ref{last} motivate the problem of recognizing countably Hausdorff paratopological groups, equivalently, recognizing Hausdorff paratopological groups whose quasi-Roelcke uniformity has countable pseudocharacter $\psi(\FU)=\min\{|\U|:\U\subset\FU,\;\bigcap\U=\Delta_X\}$. This problem is treated in \cite{BR15} where many upper bounds on $\psi(\FU)$ are found.

\begin{remark} In \cite[Example 3.3]{LL} Lin and Liu constructed an example of a Hausdorff  paratopological abelian group of countable pseudocharacter, which is not submetrizable. In \cite[2.15]{IS} S\'anchez constructed a precompact Hausdorff paratopological abelian group with countable pseudo-character, which admits no injective continuous map into a first-countable space. Finally in \cite[8.1]{BR15} the authors constructed an example of a zero-dimensional (and hence regular) Hausdorff abelian group $G$ of countable pseudocharacter which is not submetrizable (and has no $G_\delta$-diagonal under MA$+\neg$CH). These examples show that Corollary~\ref{last} cannot be generalized to Hausdorff paratopological groups of countable pseudocharacter.
\end{remark}

\section*{Acknowledgements}

This paper was initiated during the visit of the authors to the University of Hradec Kr\'alov\'e (Czech Republic) in September 2014. The authors express their sincere thanks to this university (and personally to Du\v san Bedna\v r\'\i k) for the hospitality. The authors thank  Li-Hong Xie and Peng-Fei Yan for the suggestion to generalize our initial results beyond the class of paratopological groups (to the class of topological monoids with open shifts). The second author also thanks his Lady for the inspiration.


\begin{thebibliography}{}

\bibitem{Ar} A.V.~Arhangel'skii, {\em Topological invariants in algebraic environment}, In: Recent Progress in General Topology II, M. Hu\v sek and J. van Mill, Eds., pp. 1–57. Noth-Holland, Elsevier, Amsterdam 2002.

\bibitem{AT} A.~Arhangel'skii, M.~Tkachenko, {\em Topological groups and related structures}, Atlantis Press, Paris; World Sci. Publ. Co. Pte. Ltd., Hackensack, NJ, 2008.

\bibitem{BR02} T.~Banakh, O.~Ravsky, {\em Oscillator topologies on paratopological groups and related number invariants}, Algebraical Structures and their Applications, Kyiv: Inst. Mat. NANU, (2002) 140--153; available at (http://arxiv.org/abs/0810.3028).

\bibitem{BR15} T.~Banakh, O.~Ravsky, {\em On the submetrizability number and $i$-weight of quasi-uniform spaces and paratopological groups}, preprint (http://arxiv.org/abs/1503.04278).


\bibitem{Eng} R.~Engelking, {\em General Topology}, Heldermann Verlag, Berlin, 1989.

\bibitem{FL} P.~Fletcher, W.~Lindgren, {\em Quasi-uniform spaces}, Lecture Notes in Pure and Applied Mathematics, {\bf 77}. Marcel Dekker, Inc., New York, 1982.



\bibitem{Koper} R.~Kopperman, {\em Lengths on semigroups and groups}, Semigroup Forum {\bf 25}:3-4 (1982), 345--360.

\bibitem{Ku1} H.-P.~K\"unzi, {\em Quasi-uniform spaces}, Encyclopedia of General Topology (eds.: K.P.~Hart, J.~Nagata, J.~Vaughan), Elsevier Sci. Publ., Amsterdam, (2004), 266--270.

\bibitem{Ku2} H.-P.~K\"unzi, {\em Quasi-uniform spaces in the year 2001}, Recent Progress in General Topology, II, 313--344, North-Holland, Amsterdam, 2002.

\bibitem{KMR} H.P.~K\"unzi, J.~Mar\'\i n, S.~Romaguera, {\em Quasi-uniformities on topological semigroups and bicompletion}, Semigroup Forum {\bf 62}:3 (2001), 403--422.

\bibitem{LL} F.~Lin, C.~Liu, {\em On paratopological groups}, Topology Appl. {\bf 159}:10-11 (2012),  2764--2773.

\bibitem{Rav1} A.~Ravsky, {\em Paratopological groups I}, Mat. Stud. {\bf 16}:1 (2001), 37--48.

\bibitem{Rav2}
A.~Ravsky, {\em Paratopological groups II}, Mat. Stud. {\bf 17}:1 (2002), 93--101.


\bibitem{IS} I.~S\'anchez, {\em Condensations of paratopological groups}, Topology Appl. {\bf 180} (2015) 124--131.


\bibitem{RD} W.~Roelcke, S.~Dierolf, {\em Uniform structures on topological groups and their quotients}, Advanced Book Program. McGraw-Hill International Book Co., New York, 1981.

\bibitem{Tka}
M.~Tkachenko, {\em Semitopological and paratopological groups vs topological groups},
Recent Progress in General Topology. III, 825--882, Atlantis Press, Paris, 2014.

\bibitem{Tka2}
M.~Tkachenko, {\em Axioms of separation in semitopological groups and related functors}, Topology
Appl., {\bf 161} (2014), 364--376.

\bibitem{XL} L.H.~Xie, L.~Lin, {\em Submetrizability in paratopological groups}, Topology Proc. {\bf 44} (2014), 139--149.


\end{thebibliography}
\end{document}